  \def\<{{\langle}}
  \def\>{{\rangle}}
  \def\note#1{{}}
  \def\note#1{}
  \def\beq{\begin{equation}}
  \def\eeq{\end{equation}}
     \def\1{\mathbf{1}}
  \newcounter{zlist}
  \newcounter{blist}
  \newcounter{rlist}
   \newcounter{alist}
\def\stac#1{\raise-.2cm\hbox{$\stackrel{\displaystyle\otimes}{\scriptscriptstyle{#1}}$}}
\def\cten#1{\raise-.2cm\hbox{$\stackrel{\displaystyle\widehat{\otimes}}
{\scriptscriptstyle{#1}}$}}
  \def\Label#1{\label{#1}\ifmmode\llap{[#1] }\else
  \marginpar{\smash{\hbox{\tiny [#1]}}}\fi}
  \def\Label{\label}
  \newtheorem{proposition}{Proposition}
  \newtheorem{lemma}[proposition]{Lemma}
  \newtheorem{corollary}[proposition]{Corollary}
  \newtheorem{theorem}[proposition]{Theorem}
  \theoremstyle{definition}
  \newtheorem{example}[proposition]{Example}
  \theoremstyle{remark}
  \theoremstyle{definition}
\begin{document}

 \title{Finitistic Weak Dimension of Commutative Arithmetical Rings}
 \author{Fran\c{c}ois Couchot}
 \address{ Laboratoire de Math\'ematiques Nicolas Oresme, CNRS UMR
  6139,
D\'epartement de math\'ematiques et m\'ecanique,
14032 Caen cedex, France}
  \email{francois.couchot@unicaen.fr}
 \subjclass[2010]{13F05, 13F30}
 \keywords{flat module, chain ring, arithmetical ring, weak dimension.}

\vspace*{1cm}
  \begin{abstract}
It is proven that each commutative arithmetical ring $R$ has a finitistic weak dimension $\leq 2$. More precisely, this dimension is $0$ if $R$ is locally IF, $1$ if $R$ is locally semicoherent and not IF, and $2$ in the other cases.
  \end{abstract}
  \maketitle

All rings in this paper are unitary and commutative.
A  ring $R$ is said to be a {\it chain ring} if its lattice of ideals is totally ordered by inclusion, and $R$ is called {\it arithmetical} if $R_P$ is a  chain ring for each maximal ideal $P$.
If $M$ is an $R$-module, we denote by $\mathrm{w.d.}(M)$ its {\it weak dimension}. Recall that $\mathrm{w.d.}(M)\leq n$ if $\mathrm{Tor}^R_{n+1}(M,N)=0$ for each $R$-module $N$. For any ring $R$, its {\it global weak dimension} $\mathrm{w.gl.d}(R)$ is the supremum of $\mathrm{w.d.}(M)$ where $M$ ranges over all (finitely presented cyclic) $R$-modules. Its {\it finitistic weak dimension}
$\mathrm{f.w.d.}(R)$ is the supremum of $\mathrm{w.d.}(M)$ where $M$ ranges over all  $R$-modules of finite weak dimension. 

When $R$ is an arithmetical ring, by \cite{Oso69}, a paper by B. Osofsky, we know that $\mathrm{w.gl.d}(R)\leq 1$ if $R$ is reduced, and $\mathrm{w.gl.d}(R)=\infty$ otherwise. By using the small finitistic dimension, similar results are proved by S. Glaz and S. Bazzoni when $R$ is a  Gaussian ring satisfying one of the following two conditions:
\begin{itemize}
\item $R$ is locally coherent (\cite[Theorem 3.3]{Gla05});
\item $R$ contains a prime ideal $L$ such that $LR_L$ is nonzero and T-nilpotent (a slight generalization of \cite[Theorem 6.4]{BaGl07} using  \cite[Theorems P and 6.3]{Bas60}).
\end{itemize}
They conjectured that these conditions can be removed. Recall that each arithmetical ring is Gaussian.

In this paper, we only investigate the finitistic weak dimension of arithmetical rings (it seems that it is more difficult for 	Gaussian rings). Main results are  summarized in the following theorem:
\begin{theorem}
\label{T:main} Let $R$ be an arithmetical ring. Then:
\begin{enumerate}
\item $\mathrm{f.w.d.}(R)=0$ if $R$ is locally IF;
\item $\mathrm{f.w.d.}(R)=1$ if $R$ is locally semicoherent and not locally IF;
\item $\mathrm{f.w.d.}(R)=2$ if $R$ is not locally semicoherent. 
\end{enumerate}
\end{theorem}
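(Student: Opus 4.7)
The proof naturally splits in two stages. In the first stage I would reduce to the local case by observing that weak dimension commutes with localization: $\mathrm{w.d.}_R(M) = \sup_P \mathrm{w.d.}_{R_P}(M_P)$, where $P$ runs over the maximal ideals of $R$. This yields $\mathrm{f.w.d.}(R) = \sup_P \mathrm{f.w.d.}(R_P)$; the inequality $\le$ is immediate, while $\ge$ follows by globalising a finitely presented witness at each $P$. Since $R$ arithmetical means each $R_P$ is a chain ring, and the hypotheses ``locally IF'' and ``locally semicoherent'' translate into the IF, respectively semicoherent, property of every $R_P$, the theorem reduces to a classification of $\mathrm{f.w.d.}(R_P)$ for chain rings $R_P$.

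In the second stage I would establish the trichotomy for a chain ring $R$: $\mathrm{f.w.d.}(R) = 0,\ 1$, or $2$ according as $R$ is IF, semicoherent but not IF, or not semicoherent. If $R$ is IF then every module of finite flat dimension is flat (the characterisation of IF rings as coherent rings in which FP-injective equals flat, combined with an induction on flat dimension), so $\mathrm{f.w.d.}(R) = 0$; conversely $\mathrm{f.w.d.}(R) = 0$ forces $R$ itself to be FP-injective via an injective envelope, and, together with coherence, gives IF.

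For the semicoherent case I would use that semicoherence of a chain ring $R$ implies $\mathrm{ann}(a)$ is flat for every $a \in R$. The short exact sequence $0 \to \mathrm{ann}(a) \to R \to Ra \to 0$ then yields $\mathrm{w.d.}(Ra) \le 1$, and a dimension-shifting argument combined with the reduction of any module to a direct limit of finitely generated (hence principal) submodules gives $\mathrm{f.w.d.}(R) \le 1$; failure of IF produces some non-flat $Ra$, forcing equality. For a general chain ring one needs the weaker uniform bound $\mathrm{w.d.}(\mathrm{ann}(a)) \le 1$, giving $\mathrm{w.d.}(Ra) \le 2$ and hence $\mathrm{f.w.d.}(R) \le 2$; when $R$ is not semicoherent, an explicit non-flat annihilator ideal realises the equality.

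The main obstacle is the uniform bound $\mathrm{w.d.}(\mathrm{ann}(a)) \le 1$ for an arbitrary chain ring: the argument must exploit the totally ordered ideal lattice, writing $\mathrm{ann}(a)$ as a directed union of principal annihilators $\mathrm{ann}(b)$ and controlling the flatness of the transition maps so that the colimit inherits flat dimension at most one. The second technical crux is the construction of an explicit non-semicoherent chain ring together with a module of weak dimension exactly $2$, which requires exhibiting a chain ring in which the flat dimension of some $\mathrm{ann}(a)$ is forced to be strictly positive by the failure of semicoherence.
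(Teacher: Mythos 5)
Your first stage (localizing at maximal ideals to reduce to chain rings) is exactly the paper's reduction, and your treatment of the IF case coincides with the paper's Proposition~\ref{P:IF}. The second stage, however, rests on a strategy that cannot work. Bounding $\mathrm{w.d.}(Ra)$ for all principal ideals via the sequence $0\to(0:a)\to R\to Ra\to 0$, and then extending to arbitrary modules by direct limits of finitely generated submodules, is a bound on the \emph{global} weak dimension, not the finitistic one. By Osofsky's theorem quoted in the introduction, $\mathrm{w.gl.d}(R)=\infty$ for every non-reduced arithmetical ring, and a reduced chain ring is a domain; so every chain ring with zero divisors has some cyclic module $R/(a)$ of infinite weak dimension, hence some annihilator $(0:a)$ of infinite weak dimension. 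Since semicoherent non-IF chain rings with zero divisors exist (the paper's example $D/aD_L$), your claim that semicoherence forces $(0:a)$ to be flat for every $a$ is false, and so is the proposed uniform bound $\mathrm{w.d.}((0:a))\le 1$ for a general chain ring. No argument that quantifies over all cyclic (or all finitely generated) modules can succeed here: the entire content of the finitistic dimension is that one restricts to modules that already have finite weak dimension, and the hypothesis $\mathrm{w.d.}(M)<\infty$ must actually be used.

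The paper's route is structurally different. The key object is the prime $Z$ of zero divisors and the localization $Q=R_Z$, which is always self FP-injective; semicoherence of $R$ is equivalent to $Q$ being IF (Proposition~\ref{P:ValSemiCoh}). Lemma~\ref{L:R=Q} shows that for $p\ge 1$ one has $\mathrm{w.d.}(M)\le p$ if and only if $\mathrm{w.d.}(M_Z)\le p$, which settles case (2) by applying the IF case to $Q$. For case (3) the upper bound uses that each $R/(a)$ with $0\ne a\in Z$ is an IF ring, so that for $M$ of finite weak dimension the sequences $0\to(0:a)\otimes_RK\to(0:a)\otimes_RF_0\to(0:a)\otimes_RM\to 0$ are pure-exact; writing $Z=\varinjlim(0:a)$ gives flatness of $Z\otimes_RM$, and the two sequences $0\to\mathrm{Tor}^R_1(R/P,M)\to P\otimes_RM\to PM\to 0$ and $0\to PM\to M\to M/PM\to 0$ then force $\mathrm{w.d.}(M)\le 2$. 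Finally, the lower bound in case (3) is not ``a non-flat annihilator ideal'' (which need not have finite weak dimension) but the explicit resolution $0\to Z\to Q\to\mathrm{E}(Q/Z)\to\mathrm{E}(Q/aQ)\to 0$ in which the first three terms are flat and the last is not, exhibiting a module of weak dimension exactly $2$. Your proposal is missing all of these ingredients, and its two announced ``technical cruxes'' are aimed at statements that are either false or insufficient.
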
 
Let $\mathcal{P}$ be a ring property. We say that a ring $R$ is {\it locally $\mathcal{P}$} if $R_P$ satisfies $\mathcal{P}$ for each maximal ideal $P$. As  in \cite{Mat85}, a ring $R$ is said to be \textit{semicoherent} if $\mathrm{Hom}_R(E,F)$ is a submodule of a flat $R$-module for any pair of injective $R$-modules $E,\ F$. A ring $R$ is said to be \textit{IF} ({\it semi-regular} in \cite{Mat85}) if each injective $R$-module is flat. If $R$ is a chain ring, we denote by $P$ its maximal ideal, $Z$ its  subset of zerodivisors which is a prime ideal and $Q(=R_Z)$ its fraction ring. If $x$ is an element of a module $M$ over a ring $R$, we denote by $(0:x)$ the annihilator ideal of $x$ and by $\mathrm{E}(M)$ the injective hull of $M$.

\bigskip

Since flatness is a local module property,  Theorem~\ref{T:main} is an immediate consequence of 
 the following theorem that we will prove in the sequel.
\begin{theorem}\label{T:mainlocal}
Let $R$ be a chain ring. Then:
\begin{enumerate}
\item $\mathrm{f.w.d.}(R)=0$ if $R$ is IF;
\item $\mathrm{f.w.d.}(R)=1$ if $R$ is semicoherent and not IF;
\item  $\mathrm{f.w.d.}(R)=2$ if $R$ is not semicoherent. Moreover, an $R$-module $M$ has  finite weak dimension if and only if $Z\otimes_RM$ is flat.
\end{enumerate}
\end{theorem}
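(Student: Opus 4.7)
The plan is to prove Theorem~\ref{T:mainlocal} by treating the three cases separately, exploiting two structural features of chain rings: every finitely generated ideal is principal, so flatness of a module $M$ is detected on the cyclic test modules $R/rR$, and the short exact sequence $0\to Z\to R\to R/Z\to 0$ relates $R$-Tor with Tor over the valuation domain $R/Z$.

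For (1), suppose $R$ is IF and $\mathrm{w.d.}(M)=n<\infty$. Given any $R$-module $N$, I would embed $N=N_0\hookrightarrow E_0$ into an injective, set $N_1=E_0/N_0$, and iterate to produce short exact sequences $0\to N_i\to E_i\to N_{i+1}\to 0$ in which every $E_i$ is injective and hence flat by the IF hypothesis. Each long Tor sequence then collapses to $\mathrm{Tor}^R_{i+1}(N_{i+1},M)\cong\mathrm{Tor}^R_i(N_i,M)$ for $i\ge 1$, and telescoping gives
$$\mathrm{Tor}^R_1(N,M)\cong\mathrm{Tor}^R_{n+1}(N_n,M)=0,$$
so $M$ is flat and $\mathrm{f.w.d.}(R)=0$.

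For the upper bound in (2) and (3), I would combine a similar Tor-shifting with the long Tor sequence of $0\to Z\to R\to R/Z\to 0$, which yields $\mathrm{Tor}^R_i(Z,M)\cong\mathrm{Tor}^R_{i+1}(R/Z,M)$ for $i\ge 1$. Since $R/Z$ is a reduced arithmetical ring, its weak global dimension is at most $1$ by Osofsky's theorem, and a degree-shifting argument reducing to cyclic test modules $R/rR$ with $r\in Z$ then yields $\mathrm{w.d.}(M)\le 2$ whenever $M$ has finite weak dimension. Under the extra semicoherence hypothesis in case (2) one more degree is shaved: some syzygy of any $M$ of finite weak dimension is shown, via flat-cover and duality arguments, to embed in $\mathrm{Hom}_R$ of injective modules, which by semicoherence sits inside a flat module, whereupon the chain structure on principal ideals forces that syzygy to be flat itself. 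The lower bound in (2) is supplied by any injective $R$-module that fails to be flat (which exists because $R$ is not IF): its kernel in a flat cover is flat, so the injective itself has weak dimension exactly $1$.

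The heart of the proof is the moreover clause in (3). For $\mathrm{w.d.}(M)<\infty\Rightarrow Z\otimes_R M$ flat, I would identify $Z\otimes_R M$, up to flat summands, with the second $R$-syzygy of $(R/Z)\otimes_R M$ via the Tor sequence above; since every $(R/Z)$-module has weak dimension $\le 1$ over the valuation domain $R/Z$, this forces the identified syzygy to be $R$-flat. Conversely, if $Z\otimes_R M$ is flat, I would splice a length-one flat $(R/Z)$-resolution of $(R/Z)\otimes_R M$ (lifted to $R$ through $0\to Z\to R\to R/Z\to 0$) with the flat module $Z\otimes_R M$ to build an explicit length-two flat $R$-resolution of $M$. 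The main obstacle is carrying out this identification and splicing without uncontrolled error terms, and, in parallel, producing in the non-semicoherent case a module of weak dimension exactly $2$ to match the upper bound; I would construct the latter from a pair of injectives witnessing the failure of semicoherence, turning the non-embeddability of their Hom in a flat module into a Tor-obstruction at degree two.
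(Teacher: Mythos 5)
Your part (1) is correct and is a genuinely different (and arguably more elementary) route than the paper's: you dimension-shift along an injective resolution of the test module $N$, using only ``injective $\Rightarrow$ flat'', whereas the paper shifts along the flat resolution of $M$, using that over an IF ring every flat module is FP-injective and hence pure in any flat overmodule. Both work. The difficulties are all in (2) and (3), where your sketch does not contain the ideas that actually carry the proof.

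In (2), the step ``some syzygy \dots embeds in $\mathrm{Hom}_R$ of injectives, which by semicoherence sits inside a flat module, whereupon the chain structure forces that syzygy to be flat'' is not an argument: a submodule of a flat module over a chain ring is nowhere near flat (any non-flat ideal is a counterexample), and no mechanism is given to upgrade the embedding to purity. The paper instead uses a structural equivalence you do not reproduce: a chain ring is semicoherent iff $Q=R_Z$ is IF (Proposition~\ref{P:ValSemiCoh}), combined with Lemma~\ref{L:R=Q}, which transfers $\mathrm{w.d.}(M)\leq p$ back and forth between $R$ and $Q$ via the fact that $(0:a)$ is a $Q$-module (Lemma~\ref{L:FlatAnn}). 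Your lower bound is also unjustified: the kernel of a flat cover of a non-flat injective need not be flat (if it always were, every injective would have weak dimension $\leq 1$); the correct cheap witness is $R/(a)$ for a regular non-unit $a$. In (3), both halves of the ``moreover'' clause break down. Tensoring $0\to Z\to R\to R/Z\to 0$ with $M$ exhibits $Z\otimes_RM$ as an extension of $ZM$ by $\mathrm{Tor}^R_1(R/Z,M)$, not as a second syzygy of $(R/Z)\otimes_RM$; and even granting some identification, having weak dimension $\leq 1$ \emph{over} $R/Z$ says nothing about $R$-flatness, since $R/Z$ is not a flat $R$-module. The same confusion sinks the converse: splicing a flat $(R/Z)$-resolution onto $Z\otimes_RM$ cannot yield a flat $R$-resolution, because flat $R/Z$-modules are not flat $R$-modules. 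Finally the degree-two witness is only gestured at. The actual engine of the paper is Lemma~\ref{L:Pflat}: for each $0\neq a\in Z$, $(0:a)$ is flat over the IF ring $R/(a)$, so $(0:a)\otimes_R M$ is $R/(a)$-flat and the induced sequence on $(0:a)\otimes_R-$ is pure-exact; taking the direct limit over $a$ (using $Z=\cup_{a}(0:a)$) gives flatness of $Z\otimes_RM$, and the upper bound $\mathrm{w.d.}(M)\leq 2$ then follows from the filtration $0\to PM\to M\to M/PM\to 0$ after reducing to $R=Q$. The witness of weak dimension exactly $2$ is the explicit module $\mathrm{E}(Q/aQ)$ with flat resolution $0\to Z\to Q\to \mathrm{E}(Q/Z)\to \mathrm{E}(Q/aQ)\to 0$, resting on results of \cite{Couch03}. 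None of these ingredients is recoverable from your sketch, so the proposal as it stands does not prove (2) or (3).
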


An exact sequence of $R$-modules $0 \rightarrow F \rightarrow E \rightarrow G \rightarrow 0$  is {\it pure}
if it remains exact when tensoring it with any $R$-module. Then, we say that $F$ is a \textit{pure} submodule of $E$. When $E$ is flat, it is well known that $G$ is flat if and only if $F$ is a pure submodule of $E$. An $R$-module $E$ is {\it FP-injective}  if 
$\hbox{Ext}_R^1 (F, E) = 0$  for any finitely presented $R$-module $F$.  We recall that a module $E$ is FP-injective if and only if it is a pure submodule
of every overmodule. 

\bigskip

By \cite[Proposition 3.3]{Mat85} the following proposition holds:
\begin{proposition}\label{P:IF1}
A ring $R$ is IF if and only if it is coherent and self FP-injective.
\end{proposition}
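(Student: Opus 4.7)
The plan is to organize the proof around the character module $R^+ := \mathrm{Hom}_{\mathbb{Z}}(R,\mathbb{Q}/\mathbb{Z})$, which is always an injective $R$-module (via the adjunction $\mathrm{Hom}_R(-,R^+) \cong \mathrm{Hom}_{\mathbb{Z}}(-,\mathbb{Q}/\mathbb{Z})$ and the fact that $\mathbb{Q}/\mathbb{Z}$ is an injective cogenerator of the category of abelian groups), together with Lambek's duality: a module $M$ is flat if and only if $M^+$ is injective.

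For $(\Rightarrow)$, I would first extract self FP-injectivity. Since $R^+$ is injective and $R$ is IF, $R^+$ is flat; by Lambek $R^{++}$ is then injective. The canonical evaluation $R \to R^{++}$ is a pure monomorphism, which one verifies by tensoring with a finitely presented $F$ and using the natural isomorphism $F\otimes_R M^{++} \cong (F\otimes_R M)^{++}$ valid for f.p.\ $F$. Thus $R$ sits as a pure submodule of an injective module, hence is FP-injective. For coherence, $R$ being FP-injective is pure in its injective hull $E(R)$, and the product embedding $R^I \hookrightarrow E(R)^I$ is again pure because pure-exactness can be tested by $\mathrm{Hom}(F,-)$ against f.p.\ $F$ and products commute with $\mathrm{Hom}(F,-)$. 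Now $E(R)^I$ is injective (product of injectives), hence flat by IF, and a pure submodule of a flat module is flat, so every $R^I$ is flat. This is Chase's criterion for $R$ to be coherent.

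For $(\Leftarrow)$, the key step is showing that $R^+$ is flat. Over a coherent ring, every f.p.\ $F$ admits a resolution $\cdots\to P_1\to P_0\to F\to 0$ by f.g.\ projectives; for such a $P$ one has the natural isomorphism $\mathrm{Hom}_R(P,R)^+ \cong P\otimes_R R^+$. Applying $\mathrm{Hom}_R(-,R)$ and then $(-)^+$ to the resolution produces the same complex as applying $-\otimes_R R^+$ directly, yielding $\mathrm{Ext}^i_R(F,R)^+ \cong \mathrm{Tor}_i^R(F,R^+)$ for every f.p.\ $F$. Self FP-injectivity of $R$ kills the $i=1$ Ext term, hence $R^+$ is flat. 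Since $R^+$ is an injective cogenerator of the category of $R$-modules, any injective $E$ embeds as a direct summand of some product $(R^+)^I$; by coherence this product is flat, and summands of flat modules are flat, so $E$ is flat and $R$ is IF.

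The technical heart of the argument is the $\mathrm{Ext}$-$\mathrm{Tor}$ duality against the character module in the $(\Leftarrow)$ direction, which genuinely uses coherence through the existence of finitely generated projective resolutions of finitely presented modules. In the $(\Rightarrow)$ direction the main subtlety is verifying that arbitrary products of pure monomorphisms remain pure monomorphisms, after which the argument is essentially formal.
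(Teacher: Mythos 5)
Your argument is correct. Note, however, that the paper does not prove this proposition at all: it simply imports it as \cite[Proposition 3.3]{Mat85}, so there is no internal proof to compare against. What you have written is the standard self-contained character-module proof (essentially the one found in Stenstr\"om and in Colby's work on rings with flat injectives): Lambek duality $M$ flat $\Leftrightarrow$ $M^+$ injective, purity of the evaluation $R\to R^{++}$, Chase's criterion ($R$ coherent $\Leftrightarrow$ $R^I$ flat for all $I$) for the coherence half of the forward direction, and the duality $\mathrm{Ext}^i_R(F,R)^+\cong\mathrm{Tor}_i^R(F,R^+)$ for finitely presented $F$ over a coherent ring in the converse. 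All the individual steps check out: the isomorphism $F\otimes_R M^{++}\cong(F\otimes_RM)^{++}$ for finitely presented $F$ does follow from $F\otimes_RN^+\cong\mathrm{Hom}_R(F,N)^+$ applied twice; products of pure monomorphisms are pure because purity is detected by surjectivity of $\mathrm{Hom}_R(F,-)$ for finitely presented $F$, which commutes with products; and coherence is genuinely needed, exactly where you say it is, to push a finitely generated projective resolution through the $\mathrm{Ext}$--$\mathrm{Tor}$ duality in all degrees. One small streamlining is available in the forward direction: having shown $R\to R^{++}$ is pure with $R^{++}$ injective (hence flat, by IF), you can take $I$-fold products of this single pure embedding to get $R^I$ pure in the flat module $(R^{++})^I$, which yields coherence without ever invoking the injective hull $E(R)$; but your route through $E(R)$ is equally valid.
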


The following proposition will be used frequently in the sequel. 

\begin{proposition}
\label{P:ValSemiCoh} Let $R$ be a chain ring.  The following conditions are equivalent:
\begin{enumerate}
\item $R$ is semicoherent; 
\item $Q$ is an IF-ring;
\item $Q$ is coherent;
\item either $Z=0$ or $Z$ is not flat.
\item for each nonzero element $a$ of $Z$ $\mathrm{E}(Q/Qa)$ is flat.
\end{enumerate} 
\end{proposition}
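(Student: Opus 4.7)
The plan is to establish the equivalences $(2) \Leftrightarrow (3)$, $(3) \Leftrightarrow (4)$, $(2) \Leftrightarrow (5)$, and $(1) \Leftrightarrow (2)$, which together give the full cycle. The equivalence $(2) \Leftrightarrow (3)$ is the starting point: because every element of $R \setminus Z$ becomes invertible in $Q = R_Z$, every non-unit of $Q$ lies in $ZQ$ and is a zero-divisor. For a chain ring in which each non-unit is a zero-divisor, the double-annihilator identity $(0 : (0 : a)) = Ra$ follows automatically from the total order on ideals, which is exactly self FP-injectivity of the ring. Proposition~\ref{P:IF1} then identifies ``$Q$ is IF'' with ``$Q$ is coherent''.

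For $(3) \Leftrightarrow (4)$, I would exploit that in the chain ring $Q$ every finitely generated ideal is principal, so coherence is equivalent to the principality of every annihilator $(0 :_Q b)$. Translating back to $R$ via $b = a/1$ yields $(0 :_Q b) = (0 :_R a)Q$, and a direct calculation gives $\mathrm{Tor}_1^R(Z, R/aR) = (0 :_Z a)/Z(0 : a)$. One then checks that flatness of $Z$, i.e.\ vanishing of these Tor groups for every $a$, corresponds exactly to the failure of principality of $(0 :_R a)Q$ for a suitable $a \in Z$, with the case $Z = 0$ handled trivially since then $Q$ is a field. For $(2) \Leftrightarrow (5)$, if $Q$ is IF then each injective $Q$-module is flat over $Q$ and hence flat over $R$ because $R \to Q$ is flat; the module $\mathrm{E}(Q/Qa)$ is computed inside $\Mod{Q}$ since $Q/Qa$ is already a $Q$-module and its injective hull extends that structure. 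Conversely, if every $\mathrm{E}(Q/Qa)$ is flat, one uses that the cyclic modules $Q/Qa$ detect, via direct sums and directed unions, all injective $Q$-modules, to recover IF-ness of $Q$.

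The subtle equivalence is $(1) \Leftrightarrow (2)$. For $(2) \Rightarrow (1)$, I would use that every injective $R$-module $E$ is divisible, and the chain-ring structure lets one uniquely extend multiplication by non-zero-divisors, turning $E$ into a $Q$-module; thus for any two injective $R$-modules $\mathrm{Hom}_R(E, F) = \mathrm{Hom}_Q(E, F)$, which is a $Q$-module. Since $Q$ is IF and this Hom embeds into a product of copies of an injective $Q$-cogenerator, it lies inside a flat $R$-module. For $(1) \Rightarrow (2)$, I would specialize semicoherence to $E = F = \mathrm{E}(R)$; the $Q$-action on $\mathrm{E}(R)$ identifies $Q$ with a subring of $\mathrm{End}_R(\mathrm{E}(R))$, so $Q$ sits inside a flat $R$-module. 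Then combining self FP-injectivity of $Q$ with purity arguments in the chain setting one extracts coherence of $Q$ from this flat embedding.

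The principal obstacle will be $(1) \Rightarrow (2)$: passing from the abstract submodule-of-a-flat condition on Hom-sets of injectives to the concrete coherence property of $Q$ requires carefully locating annihilators of elements of $Q$ inside $\mathrm{End}_R(\mathrm{E}(R))$ and ensuring that their finite generation transfers through the flat embedding. A delicate point is also choosing the right pair of injectives to plug into the semicoherence definition, since an ill-chosen pair will only yield weaker flatness conditions that do not distinguish between flat and non-flat $Z$.
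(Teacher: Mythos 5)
Your proposal diverges from the paper at the outset: the paper does not prove these equivalences from scratch but assembles them from three cited results ($Q$ is self FP-injective by [Cou82, Th\'eor\`eme 2.8], $(1)\Leftrightarrow(2)$ by [Couc09, Corollary 5.2], and $(2)\Leftrightarrow(4)\Leftrightarrow(5)$ by [Couch03, Theorem 10] applied to $Q$), plus Proposition~\ref{P:IF1} for $(2)\Leftrightarrow(3)$. Measured against that, the only part of your sketch that genuinely works is $(2)\Leftrightarrow(3)$: the double-annihilator identity $(0:(0:a))=Qa$ does follow from the total order (if $b\notin aQ$ write $a=bc$ with $c$ a non-unit, hence $c\in Z$, and use that the nonzero ideals $bQ$ and $(0:c)$ are comparable to produce $x$ with $xa=0$, $xb\neq 0$), and together with the decomposition of finitely presented modules over a chain ring into cyclics this gives self FP-injectivity of $Q$. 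Your $(2)\Rightarrow(1)$ and $(2)\Rightarrow(5)$ directions are also essentially sound (a product of flat modules over the coherent ring $Q$ is flat, and injective $R$-modules are $Q$-modules by divisibility).

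The remaining directions have genuine gaps. For $(3)\Leftrightarrow(4)$ you write that one ``checks that flatness of $Z$ corresponds exactly to the failure of principality of $(0:a)Q$''; this is not a proof step but a restatement of the theorem to be proved (it is precisely [Couch03, Theorem 10]), and your supporting Tor computation is wrong: $\mathrm{Tor}_1^R(Z,R/aR)\cong(0:_Za)/aZ$, not $(0:_Za)/Z(0:a)$. The implication ``$Q$ not coherent $\Rightarrow$ $Z$ flat'' in particular requires an analysis of non-finitely-generated annihilator ideals in chain rings that your sketch does not contain. For $(1)\Rightarrow(2)$ your argument is vacuous: you embed $Q$ into $\mathrm{End}_R(\mathrm{E}(R))$ and hence into a flat module, but $Q$ is already flat over $R$, so no information about coherence of $Q$ can be extracted; to prove the contrapositive you would have to exhibit, when $Q$ is not coherent, a specific pair of injectives $E,F$ for which $\mathrm{Hom}_R(E,F)$ embeds in \emph{no} flat module, and the sketch never does this. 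Finally, for $(5)\Rightarrow(2)$ the claim that the modules $Q/Qa$ ``detect all injective $Q$-modules via direct sums and directed unions'' fails over non-Noetherian rings, where injectives need not decompose into indecomposables; the workable route is instead to show that flatness of $\mathrm{E}(Q/Qa)$ forces $(0:a)$ to be principal, which again is the cited content of [Couch03, Theorem 10].
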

\begin{proof}
By \cite[Th\'eor\`eme 2.8]{Cou82} $Q$ is self FP-injective. So, $(2)\Leftrightarrow (3)$ by Proposition~\ref{P:IF1} and $(1)\Leftrightarrow (2)$ by \cite[Corollary 5.2]{Couc09}. By applying \cite[Theorem 10]{Couch03} to $Q$ we get that $(2)\Leftrightarrow (4)\Leftrightarrow (5)$.
\end{proof}

\bigskip

The first assertion of Theorem~\ref{T:mainlocal} is an immediate consequence of the following proposition:
\begin{proposition}\label{P:IF}
For each  IF-ring $R$, $\mathrm{f.w.d.}(R)=0$.
\end{proposition}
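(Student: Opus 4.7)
The plan is to show that for an IF-ring $R$, any module of finite weak dimension is already flat, using a dimension-shifting argument made possible by the IF hypothesis. Recall that ``IF'' means every injective $R$-module is flat; in particular, the injective hull $\mathrm{E}(N)$ of any module $N$ is flat.

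The core step is a short exact sequence: for any $R$-module $N$, embed it in its injective hull to obtain
\begin{equation*}
0 \longrightarrow N \longrightarrow \mathrm{E}(N) \longrightarrow N' \longrightarrow 0,
\end{equation*}
where $\mathrm{E}(N)$ is flat. The long exact Tor sequence against any module $M$ then collapses because $\mathrm{Tor}_i^R(M,\mathrm{E}(N)) = 0$ for all $i \geq 1$, yielding natural isomorphisms
\begin{equation*}
\mathrm{Tor}_{i+1}^R(M, N') \;\cong\; \mathrm{Tor}_i^R(M, N) \qquad \text{for all } i\geq 1.
\end{equation*}

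With this shifting identity in hand, I would argue by induction on the weak dimension. Suppose $\mathrm{w.d.}(M) \leq n$ for some $n \geq 1$. Then $\mathrm{Tor}_{n+1}^R(M, N') = 0$ for every $R$-module $N'$. Since every $R$-module arises as the middle-term-cokernel $N'$ in such a sequence for its own predecessor $N$ (namely, given any $N$, form the sequence above), the isomorphism forces $\mathrm{Tor}_n^R(M, N) = 0$ for every $N$, so $\mathrm{w.d.}(M) \leq n-1$. Iterating down to $n=0$ gives that $M$ is flat, which is what we want.

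There is no real obstacle here; the proof is essentially a formal dimension-shift that becomes available exactly because the IF hypothesis lets us replace the usual ``resolve by projectives/flats'' paradigm by ``co-resolve by injectives, which happen to be flat.'' The only thing worth flagging is that one should invoke Proposition~\ref{P:IF1} (or directly the definition of IF stated in the paper) to justify that $\mathrm{E}(N)$ is flat, and then verify the case $n=1$ of the induction gives exactly the step $\mathrm{Tor}_2(M,N')\cong \mathrm{Tor}_1(M,N)$ via the flatness of $\mathrm{E}(N)$.
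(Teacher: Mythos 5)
Your proof is correct, but it takes a genuinely different route from the paper's. The paper works in the \emph{first} variable: it resolves $M$ by flat modules and invokes Stenstr\"om's result (\cite[Lemma 4.1]{Ste70}) that over an IF ring every flat module is FP-injective, hence a pure submodule of any overmodule; therefore in any short exact sequence $0\rightarrow F_1\rightarrow F_0\rightarrow K\rightarrow 0$ with $F_1,F_0$ flat the quotient $K$ is flat, and walking down a finite flat resolution shows $M$ itself is flat. You work in the \emph{second} variable: you coresolve an arbitrary test module $N$ by its injective hull, which is flat by the very definition of IF, and use the shift $\mathrm{Tor}^R_{i+1}(M,\mathrm{E}(N)/N)\cong\mathrm{Tor}^R_i(M,N)$ for $i\geq 1$ to descend from $\mathrm{w.d.}(M)\leq n$ to $\mathrm{w.d.}(M)\leq n-1$. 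Your argument is arguably more self-contained, needing only the definition of IF and the long exact sequence of Tor, whereas the paper leans on an external purity fact; the paper's argument, in exchange, yields the slightly stronger structural observation that flat submodules of flat modules are pure. Two minor points: your sentence ``every $R$-module arises as the middle-term cokernel $N'$'' is stated backwards --- what you actually use (and what your parenthetical correctly supplies) is that every $N$ admits \emph{some} $N'$ with $\mathrm{Tor}^R_n(M,N)\cong\mathrm{Tor}^R_{n+1}(M,N')$; and Proposition~\ref{P:IF1} is not the relevant citation for the flatness of $\mathrm{E}(N)$, since the definition of IF given in the paper already states exactly that every injective module is flat.
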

\begin{proof}
Let $0\rightarrow F_1\rightarrow F_0\rightarrow K\rightarrow 0$ be an exact sequence where $F_1$ and $F_0$ are flat. Since $R$ is IF, $F_1$ is FP-injective (see \cite[Lemma 4.1]{Ste70}). So, $F_1$ is a pure submodule of $F_0$. Consequently $K$ is flat.  We deduce that each $R$-module $M$ satisfying $\mathrm{w.d.}(M)<\infty$  is flat.
\end{proof}

\begin{lemma}
\label{L:FlatAnn} Let $R$ be a chain ring. Then, for each nonzero element $a$ of $P$, $(0:a)$ is a module over $Q$ and it is a flat $R/(a)$-module.
\end{lemma}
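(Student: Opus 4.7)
The plan is to prove the two assertions by exploiting the total order on principal ideals of the chain ring $R$.

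\textbf{($Q$-module structure.)} Since $a\neq 0$, any nonzero element of $(0:a)$ is a zerodivisor, so $(0:a)\subseteq Z$. It therefore suffices to show that multiplication by any $s\in R\setminus Z$ is a bijection on $(0:a)$. Injectivity is immediate because $s$ is regular. For surjectivity, given $x\in(0:a)$ I compare $(s)$ and $(x)$ in $R$: if $(s)\subseteq(x)$, writing $s=xt$ and choosing $z\neq 0$ with $xz=0$ (which exists because $x\in Z$) would give $sz=0$, contradicting $s\notin Z$. Hence $(x)\subseteq(s)$, so $x=sy'$ for some $y'\in R$; then $s(ay')=ax=0$ together with $s$ regular forces $ay'=0$, i.e.\ $y'\in(0:a)$.

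\textbf{(Flatness over $R/(a)$.)} I will verify the equational criterion for flatness. Since finitely generated ideals in the chain ring $R/(a)$ are principal, it is enough to trivialize a single relation: given $s\in R$ and $x\in(0:a)$ with $sx=0$, produce $r\in R$ and $y\in(0:a)$ with $x=ry$ and $sr\in(a)$. If $s\in(a)$, take $r=1$ and $y=x$. Otherwise, by the total order, $(a)\subsetneq(s)$, so $a=sb$ for some $b\in P\setminus\{0\}$. The decisive step is to show $x\in(b)$: if $(b)\subseteq(x)$, then $b=xc$ for some $c\in R$, whence $a=sb=c(sx)=0$, contradicting $a\neq 0$. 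So $x=by'$ for some $y'\in R$; the identity $ay'=sby'=sx=0$ gives $y'\in(0:a)$, and the pair $(r,y)=(b,y')$ satisfies $sr=sb=a\in(a)$, as required.

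The main obstacle is identifying the correct witness $(b,y')$ in the equational criterion. It is not obvious \emph{a priori} why the cofactor $b$ in the factorization $a=sb$ should divide every $x\in(0:a)\cap(0:s)$; the insight is that once $b$ is singled out, the chain structure makes the alternative $(b)\subseteq(x)$ collide with $a\neq 0$. The very same comparison of principal ideals drives surjectivity in the first part, where a clash between the zerodivisor $x$ and the regular $s$ forces $x$ to lie in $(s)$. Both parts thus rely on the observation that in a chain ring a zerodivisor paired with an ``overly small'' principal ideal is forced to vanish.
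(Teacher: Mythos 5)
Your proof is correct and follows essentially the same route as the paper's: for the $Q$-module structure you show $R\setminus Z$ acts bijectively on $(0:a)$ exactly as in the paper, and for flatness your factorization $a=sb$ with $b$ dividing $x$ and the transfer to $y'\in(0:a)$ is precisely the paper's computation ($a=ct$, $x=ty$, $\bar{c}\otimes x=\bar{a}\otimes y=0$), merely rephrased through the equational criterion instead of injectivity of $(\bar{c})\otimes_{R/(a)}(0:a)\rightarrow(0:a)$. You even supply the small divisibility justifications that the paper leaves implicit.
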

\begin{proof}
Let $x\in (0:a)$ and $s$ a regular element of $R$. Since $R$ is a chain ring $x=sy$ for some $y\in R$, and $0=ax=say$.  It follows that $ay=0$. Hence the multiplication by $s$ in $(0:a)$ is bijective. 

If $c\in R$ we denote by $\bar{c}$ the coset $c+(a)$. Any element of $(\bar{c})\otimes_{R/(a)} (0:a)$ is of the form $\bar{c}\otimes x$ where $x\in (0:a)$. Suppose that $c\notin (a)$ and $cx=0$. There exists $t\in R$ such that $a=ct\ne 0$. Since $R$ is a chain ring, from $cx=0$ and $ct\ne 0$ we deduce that $x=ty$ for some $y\in R$. We have $ay=cty=cx=0$. So, $y\in (0:a)$ and $\bar{c}\otimes x=\bar{a}\otimes y=0$. Hence $(0:a)$ is flat over $R/(a)$.
\end{proof}

\begin{lemma}\label{L:R=Q}
Let $p$ be an integer $\geq 1$, $R$ a chain ring and $M$ an $R$-module. Then $\mathrm{w.d.}(M)\leq p$ if and only if $\mathrm{w.d.}(M_Z)\leq p$.
\end{lemma}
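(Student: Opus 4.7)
For the ``only if'' direction, any flat $R$-resolution $F_\bullet\to M$ of length $\leq p$ localizes to a complex $(F_\bullet)_Z\to M_Z$, and each $(F_i)_Z$, being flat over $Q$, is also flat over $R$ (because $Q$ is itself $R$-flat); this exhibits a flat $R$-resolution of $M_Z$ of length $\leq p$.

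The ``if'' direction is the substantive one. I would pick a flat $R$-resolution
\[
0\to K\to F_{p-1}\to\cdots\to F_0\to M\to 0
\]
of length $p$ and try to prove that the $p$th syzygy $K$ is flat. Localizing gives a flat $Q$-resolution of $M_Z$ whose $p$th syzygy is $K_Z$, and the hypothesis $\mathrm{w.d.}(M_Z)\leq p$ (which is the same whether interpreted over $R$ or over $Q$, since flat $Q$-modules are $R$-flat and $Q\otimes_R-$ sends flat $R$-resolutions of a $Q$-module to flat $Q$-resolutions) forces $K_Z$ to be $Q$-flat. So the lemma reduces to the following key claim: \emph{if $K$ is a submodule of a flat $R$-module $F$ and $K_Z$ is $Q$-flat, then $K$ is $R$-flat.} Note that this is precisely where the assumption $p\geq 1$ is used, since for $p=0$ there is no ambient flat module $F$ available.

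To establish the key claim I would use the standard flatness criterion over a chain ring: a module $N$ is flat iff $(0:_N a)=(0:_R a)\,N$ for every $a\in R$; this follows from $\mathrm{Tor}^R_1(N,R/(a))=(0:_N a)/(0:_R a)N$ together with the fact that finitely generated ideals of a chain ring are principal. For regular $a$ the identity is immediate because multiplication by $a$ is injective on the flat module $F$, hence on $K$. For $a\in Z$ and $x\in K$ with $ax=0$, applying the analogous criterion to $K_Z$ over $Q$ and then clearing denominators yields an equation $ux=\sum_i r_i k_i$ in $K$ with $u\notin Z$, $r_i\in(0:_R a)$ and $k_i\in K$. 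The main obstacle is then the final descent from $ux\in(0:_R a)K$ to $x\in(0:_R a)K$, and this is exactly what Lemma~\ref{L:FlatAnn} makes possible: since $(0:_R a)$ is a $Q$-module, multiplication by $u$ is bijective on $(0:_R a)$, so each $r_i$ has a ``division by $u$'' $\tilde r_i\in(0:_R a)$ with $u\tilde r_i=r_i$; then $u(x-\sum_i\tilde r_i k_i)=0$ in $K$, and the injectivity of multiplication by the regular element $u$ on the flat module $F\supseteq K$ yields $x=\sum_i\tilde r_i k_i\in(0:_R a)\,K$.
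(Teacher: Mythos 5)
Your argument is correct, but it takes a genuinely different route through the substantive (``if'') direction. The paper never passes to the syzygy: for each $a$ it splices the four-term sequence $0\to(0:a)\to R\xrightarrow{a}R\to R/(a)\to 0$ against its analogue over $Q$ and uses the fact that $(0:a)$ is a $Q$-module (Lemma~\ref{L:FlatAnn}) to obtain $\mathrm{Tor}^R_{q+2}(R/(a),M)\cong\mathrm{Tor}^Q_{q+2}(Q/Qa,M_Z)$ for $q\geq1$, which disposes of $p\geq2$ at once; the case $p=1$ is then handled by a separate diagram chase showing that $(0:a)\otimes_RM\to M$ is injective, hence $\mathrm{Tor}^R_2(R/(a),M)=0$. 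You instead isolate a descent statement about the $p$-th syzygy (a submodule $K$ of a flat module with $K_Z$ flat over $Q$ is itself flat) and verify it element-wise via the criterion $(0:_K a)=(0:_R a)K$, with Lemma~\ref{L:FlatAnn} supplying the division by the regular element $u$ inside $(0:_R a)$ --- the same structural fact that drives the paper's proof, deployed at the level of elements rather than of Tor groups. Your version is more self-contained and makes the role of the hypothesis $p\geq1$ transparent; what it does not produce is the isomorphism (\ref{eq:tor}) itself, which the paper explicitly reuses in the proof of Theorem~\ref{T:mainlocal}(3) to conclude $\mathrm{f.w.d.}(R)=\mathrm{f.w.d.}(Q)$, so with your route that identity would have to be established separately. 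One small point worth making explicit in your write-up: when you apply the flatness criterion to $K_Z$ over $Q$ you identify $(0:_Q a)$ with $(0:_R a)$, and this identification is again exactly what Lemma~\ref{L:FlatAnn} provides.
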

\begin{proof} Because of the flatness of $Q$, $\mathrm{w.d.}(M)\leq p$ implies $\mathrm{w.d.}(M_Z)\leq p$.

Conversely, let  $0\ne a\in R$. By Lemma~\ref{L:FlatAnn} $(0:a)$ is a module over $Q$. So, from the exact sequences 
\[0\rightarrow (0:a)\rightarrow R\xrightarrow{a} R\rightarrow R/(a)\rightarrow 0,\]
\[0\rightarrow (0:a)\rightarrow Q\xrightarrow{a} Q\rightarrow Q/Qa \rightarrow 0,\]
we deduce   for each  integer $q\geq 1$ the following isomorphisms:
\begin{equation}\label{eq:tor}
\mathrm{Tor}^R_{q+2}(R/(a),M)\cong \mathrm{Tor}^R_q((0:a),M)\cong \mathrm{Tor}^Q_q((0:a),M_Z)\cong \mathrm{Tor}^Q_{q+2}(Q/Qa,M_Z). 
\end{equation}

So, if $p>1$ we get that $\mathrm{w.d.}(M_Z)\leq p$ implies $\mathrm{w.d.}(M)\leq p$. Now assume that $p=1$. Then, for each $a\in R$ we have:
\[\mathrm{Tor}^Q_1(Qa,M_Z)\cong \mathrm{Tor}^Q_2(Q/Qa,M_Z)=0.\]
In the following commutative diagram
\[\begin{array}{ccccccc}
0 &\rightarrow & \mathrm{Tor}^R_1((a),M) & \rightarrow & (0:a)\otimes_RM & \rightarrow & M \\
& & & & \downarrow & & \downarrow \\
& & 0 & \rightarrow & (0:a)\otimes_QM_Z & \rightarrow & M_Z
\end{array}\]
the left vertical map is an isomorphism because $(0:a)$ is a $Q$-module (Lemma~\ref{L:FlatAnn}). It follows that the homomorphism $(0:a)\otimes_RM  \rightarrow  M$ is injective. We successively deduce that $\mathrm{Tor}^R_1((a),M)=0$ and $\mathrm{Tor}^R_2(R/(a),M)=0$. Hence $\mathrm{w.d.}(M)\leq 1$.
\end{proof}

\bigskip

From Lemma~\ref{L:R=Q} we deduce the second assertion of Theorem~\ref{T:mainlocal}:

\begin{proof}[Proof of Theorem~\ref{T:mainlocal}{\rm (2)}]
Assume that $R$ is semicoherent and not IF and let $M$ be an $R$-module with $\mathrm{w.d.}(M)<\infty$. Then we have $\mathrm{w.d.}(M_Z)<\infty$. Since $Q$ is IF, $M_Z$ is flat by Proposition~\ref{P:IF}. By Lemma~\ref{L:R=Q} $\mathrm{w.d.}(M)\leq 1$. Moreover, since $R\ne Q$ then $\mathrm{w.d.}(R/(a))=1$ for each $a\in P\setminus Z$. Consequently,  $\mathrm{f.w.d.}(R)=1$.
\end{proof}

\bigskip

\begin{lemma}
\label{L:Pflat} Let $R$ be a chain ring which is not semicoherent. Then $Z\otimes_RM$ is flat for each $R$-module $M$ satisfying $\mathrm{w.d.}(M)\leq 2$.
\end{lemma}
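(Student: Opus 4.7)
The plan is to verify that for every $a\in R$ the Tor-group
$\mathrm{Tor}_1^R(R/(a),Z\otimes_R M)$ vanishes; since $R$ is a chain ring this characterizes flatness of $Z\otimes_R M$. For $a\notin Z$ this is automatic because, by the same calculation as in Lemma~\ref{L:FlatAnn}, $Z$ (and hence $Z\otimes_R M$) is a $Q$-module, on which $a$ acts invertibly.

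Now assume $a\in Z$. Flatness of $Z$ means that the functor $Z\otimes_R-$ is exact and commutes with homology of flat resolutions, so we get successive isomorphisms
\[
\mathrm{Tor}_1^R(R/(a),Z\otimes_R M)\;\cong\;\mathrm{Tor}_1^R(Z/aZ,M)\;\cong\;Z\otimes_R T,\qquad T\;:=\;\mathrm{Tor}_1^R(R/(a),M)\;=\;(0:_M a)/(0:a)M.
\]
So it suffices to show $Z\otimes_R T=0$. The hypothesis $\mathrm{w.d.}(M)\leq 2$ gives, via the isomorphism $\mathrm{Tor}_3^R(R/(a),M)\cong\mathrm{Tor}_1^R((0:a),M)$ already used in the proof of Lemma~\ref{L:R=Q}, the vanishing $\mathrm{Tor}_1^R((0:a),M)=0$. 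Since $(0:a)$ is $R/(a)$-flat by Lemma~\ref{L:FlatAnn}, the Cartan change-of-rings spectral sequence for $R\twoheadrightarrow R/(a)$ collapses and identifies this Tor with $(0:a)\otimes_{R/(a)}T$, so
\[
(0:a)\otimes_{R/(a)}T\;=\;0.
\]

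The remaining step, and the place where I expect the main obstacle, is to deduce $Z\otimes_R T=(Z/aZ)\otimes_{R/(a)}T=0$ from the vanishing of $(0:a)\otimes_{R/(a)}T$. Two structural ingredients are available: flatness of $Z$ gives $(0:a)=(0:a)\cdot Z$, so $(0:a)$ is divisible by the maximal ideal of $R/(a)$; and self FP-injectivity of $Q$ provides the annihilator duality $\bigl(0:(0:a)\bigr)=(a)$ in $Q$. Together these should allow one to compare the two $R/(a)$-modules $(0:a)$ and $Z/aZ$ in the chain ring $R/(a)$ closely enough that the tensor vanishing over $(0:a)$ propagates to one over $Z/aZ$, finishing the proof.
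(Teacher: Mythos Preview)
Your reduction to checking $\mathrm{Tor}_1^R(R/(a),Z\otimes_R M)=0$ for all $a$ is sound, and the chain of identifications through $(0:a)\otimes_{R/(a)}T=0$ (using flatness of $Z$, the change-of-rings spectral sequence, and $\mathrm{w.d.}(M)\leq 2$) is correct. But the ``remaining step'' you flag is a genuine gap, not a routine verification. The two ingredients you list---divisibility $(0:a)=(0:a)Z$ and the duality $(0:(0:a))=(a)$---say only that $(0:a)$ is a faithful ideal of $R/(a)$ divisible by its set of zerodivisors; they give no mechanism for propagating the vanishing $(0:a)\otimes_{R/(a)}T=0$ up to the larger ideal $Z/aZ$. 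For an arbitrary $R/(a)$-module $T$ there is no such implication in a chain ring, so as written the argument does not close.

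The paper's proof proceeds differently and rests on a structural fact you never invoke: for each nonzero $a\in Z$ the quotient $R/(a)$ is an IF ring (this is where the non-semicoherence hypothesis enters, via \cite[Theorem~11(1)]{Couch03}). Tensoring a length-two flat resolution of $M$ with $(0:a)$ and using your vanishing $\mathrm{Tor}^R_p((0:a),M)=0$ together with Lemma~\ref{L:FlatAnn} gives a flat $R/(a)$-resolution of $(0:a)\otimes_R M$; IF-ness of $R/(a)$ then forces $(0:a)\otimes_R M$ to be \emph{flat} over $R/(a)$ (Proposition~\ref{P:IF}). This produces, for each $a$, a pure-exact sequence over $R$; taking the direct limit along $Z=\bigcup_{0\ne a\in Z}(0:a)$ exhibits $Z\otimes_R M$ as a pure quotient of the flat module $Z\otimes_R F_0$, hence flat. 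The IF property of $R/(a)$ is the decisive input, and it is precisely what is missing from your outline.
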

\begin{proof}
Consider the following flat resolution of $M$:
\[0\rightarrow F_2\rightarrow F_1\rightarrow F_0\rightarrow M\rightarrow 0.\]
If $a\in Z$ we have $\mathrm{Tor}^R_p((0:a),M)\cong\mathrm{Tor}^R_{p+2}(R/(a),M)=0$ for each integer $p\geq 1$. So, the following sequence is exact:
\[0\rightarrow (0:a)\otimes_RF_2\rightarrow (0:a)\otimes_RF_1\rightarrow (0:a)\otimes_RF_0\rightarrow (0:a)\otimes_RM\rightarrow 0.\]
By Lemma~\ref{L:FlatAnn} $(0:a)\otimes_RF_i$ is flat over $R/(a)$ for $i=0,1,2$. By \cite[Theorem 11(1)]{Couch03} $R/(a)$ is IF. We deduce that $(0:a)\otimes_RM$ is flat over $R/(a)$ by Proposition~\ref{P:IF}. If $K$ is the kernel of the map $F_0\rightarrow M$, it follows that the sequence $\mathcal{S}_a$ 
\[0\rightarrow  (0:a)\otimes_RK\rightarrow (0:a)\otimes_RF_0\rightarrow (0:a)\otimes_RM\rightarrow 0\]
is pure-exact over $R/(a)$, and over $R$ too.  If $0\ne r\in Z$,  there exists $0\ne a\in Z$, such that $ra=0$. Hence $Z=\cup_{a\in Z\setminus\{0\}}(0:a)$. So, if $\mathcal{S}$ is the sequence 
\[0\rightarrow  Z\otimes_RK\rightarrow Z\otimes_RF_0\rightarrow Z\otimes_RM\rightarrow 0,\]
then $\mathcal{S}=\varinjlim_{a\in Z\setminus\{0\}}\mathcal{S}_a$. By \cite[Theorem I.8.13(a)]{FuSa01} $\mathcal{S}$ is pure-exact. By Proposition~\ref{P:ValSemiCoh} $Z$ is flat. Consequently $Z\otimes_RF_0$ is flat and so is $Z\otimes_RM$.
\end{proof}

\bigskip
Now, we can prove the third assertion of Theorem~\ref{T:mainlocal}.

\begin{proof}[Proof of Theorem~\ref{T:mainlocal}{\rm (3)}]
Assume that $R$ is not semicoherent. By \cite[Proposition 14]{Couch03},  the exact sequence 
\[0\rightarrow Z\rightarrow Q\rightarrow Q/Z\rightarrow 0\]
induces the following one for each nonzero element $a$ of $Z$:
\[0\rightarrow Q/Z\rightarrow \mathrm{E}(Q/Z)\rightarrow\mathrm{E}(Q/aQ)\rightarrow 0.\] From these two exact sequences we deduce the following one:
\[0\rightarrow Z\rightarrow Q\rightarrow \mathrm{E}(Q/Z)\rightarrow\mathrm{E}(Q/aQ)\rightarrow 0.\]
It is well known that $Q$ is flat. By \cite[Proposition 8]{Couch03} $\mathrm{E}(Q/Z)$ is flat. Since $Q$ is not IF, $Z$ is flat and $\mathrm{E}(Q/aQ)$ is not flat by Proposition~\ref{P:ValSemiCoh}. So, $\mathrm{f.w.d.}(R)\geq 2$. From the isomorphism (\ref{eq:tor}) in the proof of Lemma~\ref{L:R=Q} we deduce that $\mathrm{f.w.d.}(R)=\mathrm{f.w.d.}(Q)$. So, we may assume that $R=Q$, hence the maximal ideal $P$ of $R$ is $Z$. By way of contradiction suppose that there exists   an $R$-module $M$ with $\mathrm{w.d.}(M)\geq 3$. We may replace $M$ with a syzygy module of a flat resolution and  assume that $\mathrm{w.d.}(M)=3$. We consider the following exact sequence where $F$ is flat:
\[0\rightarrow K\rightarrow F\rightarrow M\rightarrow 0.\]
Then $\mathrm{w.d.}(K)\leq 2$. By Lemma~\ref{L:Pflat} $P\otimes_RK$ is flat. So, $\mathrm{w.d.}(P\otimes_RM)\leq 1$ (recall that $P$ is flat by Proposition~\ref{P:ValSemiCoh}). Since $\mathrm{w.d.}(R/P)=1$, then $\mathrm{w.d.}(M/PM)\leq 1$ and $\mathrm{w.d.}(\mathrm{Tor}^R_1(R/P,M))\leq 1$ because these last modules are $R/P$-modules. We deduce from the exact sequence
\begin{equation}
\label{eq:one}
0\rightarrow\mathrm{Tor}^R_1(R/P,M)\rightarrow P\otimes_RM\rightarrow PM\rightarrow 0 
\end{equation}
that $\mathrm{w.d.}(PM)\leq 2$ and from the following one
\begin{equation}
\label{eq:two}
0\rightarrow PM\rightarrow M\rightarrow M/PM\rightarrow 0 
\end{equation}
that $\mathrm{w.d.}(M)\leq 2$. We get a  contradiction. So, we conclude that $\mathrm{f.w.d.}(R)=2$.

By Lemma~\ref{L:Pflat} it remains to prove that $\mathrm{w.d.}(M)\leq 2$ if $Z\otimes_RM$ is flat. As above we may assume that $R=Q$, and by using again the exact sequences~(\ref{eq:one}) and (\ref{eq:two}) we successively show that $\mathrm{w.d.}(PM)\leq 2$ and $\mathrm{w.d.}(M)\leq 2$.
\end{proof}

\bigskip

If $A$ is a proper ideal of a chain ring $R$, let $A^{\sharp}=\{r\in R\mid A\subset (A:r)\}$. Then $A^{\sharp}$ is a prime ideal containing $A$ ($A^{\sharp}/A$ is the set of zerodivisors of $R/A$).

From Theorem~\ref{T:mainlocal} and \cite[Corollary 5.3]{Couc09} we deduce the following:
\begin{corollary}
Let $A$ be a non-zero proper ideal of a chain ring $R$. The following conditions are equivalent:
\begin{enumerate}
\item $R/A$ is semicoherent;
\item $A$ is either prime or the inverse image of a non-zero proper principal ideal of $R_{A^{\sharp}}$ by the natural map $R\rightarrow R_{A^{\sharp}}$;
\item $\mathrm{f.w.d.}(R/A)\leq 1.$
\end{enumerate}
\end{corollary}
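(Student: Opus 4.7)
The plan is to obtain (1) $\Leftrightarrow$ (3) from Theorem~\ref{T:mainlocal} applied to the quotient, and to invoke \cite[Corollary 5.3]{Couc09} for (1) $\Leftrightarrow$ (2). First I would record the trivial but essential observation that $R/A$ is itself a chain ring: the ideals of $R/A$ correspond bijectively, preserving inclusions, to the ideals of $R$ that contain $A$, and any sub-chain of a chain is again a chain. This is what allows Theorem~\ref{T:mainlocal} to be applied with $R/A$ in place of the chain ring in the statement.

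Read in the contrapositive, Theorem~\ref{T:mainlocal} says that for a chain ring $S$ one has $\mathrm{f.w.d.}(S) \leq 1$ if and only if $S$ is semicoherent: clauses (1) and (2) of the theorem both realize $\mathrm{f.w.d.}(S) \in \{0,1\}$ under semicoherence (IF or not), whereas clause (3) gives $\mathrm{f.w.d.}(S) = 2 > 1$ as soon as semicoherence fails. Specializing to $S = R/A$ yields the equivalence (1) $\Leftrightarrow$ (3) immediately.

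For (1) $\Leftrightarrow$ (2) I would simply quote \cite[Corollary 5.3]{Couc09}: it describes exactly those proper ideals $A$ of a chain ring $R$ for which $R/A$ is semicoherent, and the description is precisely ``$A$ is prime, or $A$ is the preimage under $R \to R_{A^{\sharp}}$ of a nonzero proper principal ideal of $R_{A^{\sharp}}$.''

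There is no genuine obstacle: the whole argument is bookkeeping that glues the two cited inputs. The only point needing a brief justification is that $R/A$ inherits the chain-ring property so that Theorem~\ref{T:mainlocal} is applicable; after that, the three conditions line up mechanically.
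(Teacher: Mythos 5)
Your argument is correct and coincides with the paper's: the corollary is stated there as an immediate consequence of Theorem~\ref{T:mainlocal} (giving (1) $\Leftrightarrow$ (3), since the three cases of that theorem show $\mathrm{f.w.d.}\leq 1$ exactly under semicoherence) together with \cite[Corollary 5.3]{Couc09} (giving (1) $\Leftrightarrow$ (2)). Your added remark that $R/A$ is again a chain ring is the right (routine) point to make explicit.
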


By using \cite[Theorem 11]{Couch03} we deduce the following:
\begin{corollary}
Let $A$ be a non-zero proper ideal of a chain ring $R$. The following conditions are equivalent:
\begin{enumerate}
\item $R/A$ is IF;
\item either $A=P$  or $A$ is finitely generated;
\item $\mathrm{f.w.d.}(R/A)= 0.$
\end{enumerate}
\end{corollary}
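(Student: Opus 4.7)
The plan is to obtain this corollary by combining Theorem~\ref{T:mainlocal}(1) with the external characterization \cite[Theorem 11]{Couch03} that classifies the proper ideals $A$ of a chain ring $R$ for which $R/A$ is IF. Since $A$ is a proper ideal of $R$, the quotient $R/A$ is itself a chain ring, so Theorem~\ref{T:mainlocal}(1) applied to $R/A$ immediately gives the equivalence (1)$\Leftrightarrow$(3) with no further work.

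For (2)$\Rightarrow$(1), I would split into the two evident cases. If $A=P$, then $R/A$ is a field, hence trivially IF. If instead $A$ is finitely generated, then $A$ must be principal since $R$ is a chain ring, say $A=(a)$ with $a\ne 0$; the same reference \cite[Theorem 11]{Couch03} that was already invoked inside the proof of Lemma~\ref{L:Pflat} to assert that $R/(a)$ is IF handles this case directly.

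The converse (1)$\Rightarrow$(2) is the non-trivial direction, and it is also contained in \cite[Theorem 11]{Couch03}: the non-zero proper ideals $A$ of $R$ for which $R/A$ is IF are exactly $P$ and the principal ones, so if $A$ is neither $P$ nor finitely generated then $R/A$ cannot be IF. The main obstacle, were one to reprove this direction in situ rather than cite it, would be to produce, for a non-principal ideal $A$ strictly contained in $P$, a witness to the failure of coherence of $R/A$ --- typically a finitely generated ideal of $R/A$ whose annihilator is not finitely generated. The existence of a strictly descending family of principal ideals of $R$ with intersection $A$ (which exists precisely because $A$ is non-principal in a chain ring) makes such a witness available via Proposition~\ref{P:IF1}, but with \cite[Theorem 11]{Couch03} in hand this reconstruction is unnecessary and the corollary reduces to a bookkeeping exercise.
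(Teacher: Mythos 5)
Your proposal matches the paper's own (essentially one-line) argument: the corollary is deduced by citing \cite[Theorem 11]{Couch03} for the equivalence of (1) and (2) and Theorem~\ref{T:mainlocal} for the equivalence of (1) and (3), exactly as you describe. The only nuance worth noting is that the direction (3)$\Rightarrow$(1) needs the full trichotomy of Theorem~\ref{T:mainlocal} --- parts (2) and (3) are what rule out $\mathrm{f.w.d.}(R/A)=0$ when $R/A$ is not IF --- rather than part (1) alone, but this is immediate and does not affect the correctness of your reduction.
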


From these corollaries we deduce the following examples:
\begin{example}
\textnormal{Let $R$ be a valuation domain which is not a field. Let $a$ be a non-zero element of its maximal ideal $P$. Then $R/aR$ is IF, so $\mathrm{f.w.d.}(R/aR)=0$. Assume that $R$ contains a non-idempotent prime ideal $L$. If $a$ is a non-zero element of $L$, then $aL$ is a non-finitely generated ideal of $R_L$. So, $R/aL$ is not semicoherent and $\mathrm{f.w.d.}(R/aL)=2$. It is well known that $\mathrm{f.w.d.}(R)=\mathrm{w.gl.d}(R)=1$. Moreover, assume that $R$ contains a non-zero prime ideal $L\ne P$. Then, if $0\ne a\in L$ then $aR_L$ is an ideal of $R$ and $R/aR_L$ is semicoherent and not IF. So, $\mathrm{f.w.d.}(R/aR_L)=1$.}
\end{example}

Recall that a chain ring $R$ is {\it strongly discrete} if there is no non-zero idempotent prime ideal.

From Theorem~\ref{T:main} and \cite[Corollary 5.4]{Couc09} we deduce the following:
\begin{corollary}
Let $R$ be an arithmetical ring. The following conditions are equivalent:
\begin{enumerate}
\item $R$ is locally strongly discrete;
\item for each proper ideal $A$, $R/A$ is locally semicoherent;
\item for each proper ideal $A$, $\mathrm{f.w.d.}(R/A)\leq 1.$
\end{enumerate}
\end{corollary}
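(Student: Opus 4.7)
First I would dispense with the equivalence (2) $\Leftrightarrow$ (3), which is essentially a direct application of Theorem~\ref{T:main}. That theorem says, for any arithmetical ring $S$, that $\mathrm{f.w.d.}(S)\leq 1$ holds precisely when $S$ is locally IF or locally semicoherent; and since an IF chain ring is semicoherent (both assertions (1) and (2) of Theorem~\ref{T:mainlocal} place the ring in the range $\mathrm{f.w.d.}\leq 1$, corresponding via Proposition~\ref{P:ValSemiCoh} to the semicoherent case), ``locally IF'' is subsumed by ``locally semicoherent''. Applying this to $S=R/A$ for each proper ideal $A$ of $R$ yields the equivalence of (2) and (3).

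Next I would attack (1) $\Leftrightarrow$ (2) by reducing to the chain ring case through localization. Since $R$ is arithmetical, $R_P$ is a chain ring for each maximal ideal $P$ of $R$; the property ``locally $\mathcal{P}$'' is defined pointwise on maximal ideals; and $(R/A)_P \cong R_P / A_P$ whenever $A\subseteq P$. Moreover, every proper ideal of the chain ring $R_P$ is the extension of its contraction, i.e.\ of the form $A_P$ for some proper ideal $A$ of $R$ contained in $P$. Thus condition (2) is equivalent to saying that, for each maximal ideal $P$ of $R$ and each proper ideal $B$ of the chain ring $R_P$, the quotient $R_P/B$ is semicoherent; and condition (1) is, by definition, that each $R_P$ is strongly discrete.

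What remains, after this reduction, is the purely local statement: \emph{a chain ring $S$ is strongly discrete if and only if $S/B$ is semicoherent for every proper ideal $B$ of $S$}. This is exactly the content of the cited result \cite[Corollary 5.4]{Couc09}, so I would invoke it directly to close the loop.

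The only non-routine point in the plan is the bookkeeping of the localization step: one must verify that ``every proper ideal of $R_P$ arises as $A_P$ for a proper ideal $A$ of $R$ contained in $P$'', which is the standard correspondence between ideals of $R_P$ and ideals of $R$ disjoint from the multiplicative set $R\setminus P$, and that semicoherence of $(R/A)_P$ really is the semicoherence of the chain ring $R_P/A_P$. Once this is noted, the whole argument is assembled from Theorem~\ref{T:main} and \cite[Corollary 5.4]{Couc09} with essentially no further work.
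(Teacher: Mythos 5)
Your proposal is correct and follows exactly the route the paper intends: the paper gives no explicit argument, stating only that the corollary follows from Theorem~\ref{T:main} together with \cite[Corollary 5.4]{Couc09}, which is precisely the combination you assemble (Theorem~\ref{T:main} for the equivalence of (2) and (3), and the cited corollary, after the routine localization bookkeeping, for the equivalence of (1) and (2)).
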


 \end{document}